\newcommand{\norm}[1]{\left\lVert #1 \right\rVert}
\newtheorem{theorem}{Theorem}[section]
\newtheorem{lemma}{Lemma}[section]
\newtheorem{proposition}{Proposition}[section]
\newtheorem{definition}{Definition}[section]
\newtheorem{remark}{Remark}[section]
\title{On solutions of the Diophantine equation $L_n+L_m=3^a$ } 
\author{Pagdame TIEBEKABE $\&$ Ismaïla DIOUF\\}
\date{\today}
\begin{document}
\maketitle
\begin{abstract}
Let $(L_n)_{n\geq 0}$ be the Lucas sequence given by $L_0 = 2, L_1 = 1$ and $L_{n+2} = L_{n+1}+L_n$ for $n \geq 0$. In this paper, we are interested in finding all powers of three which are sums of two Lucas numbers, i.e., we study the exponential Diophantine equation $L_n + L_m = 3^{a}$ in nonnegative integers $n, m,$ and $a$. The proof of our main theorem uses lower bounds for linear forms in logarithms, properties of continued fractions, and a version of the Baker-Davenport reduction method in Diophantine approximation.
\end{abstract}
\textbf{Keywords and phrases}: Linear forms in logarithm; Diophantine equations; Fibonacci sequence; Lucas sequence; perfect powers.\\
\textbf{2010 Mathematics Subject Classification.} 11B39, 11J86

\section{Introduction} 

The determination of perfect powers of Lucas and Fibonacci sequences does not date from today.
The real contribution of determination of perfect powers of Lucas and Fibonacci sequences began in 2006.
By classical and modular approaches of Diophantine equations, Bugeaud,  Mignotte, and Siksek \cite{5} defined all  perfect powers of Lucas and Fibonacci sequences by solving the equations $F_n=y^p$ and $L_n=y^p$ respectively. From there, many researchers tackled similar problems.
It is in the same thought that, others have determined the powers of $2$ of the sum/difference of two Lucas numbers \cite{3}, powers of $2$ of the sum/difference of Fibonacci numbers \cite{4}, powers of $2$ and of $3$ of the product of Pell numbers and Fibonacci numbers.

We move our interest on the powers of $3$ as a  sum of two Lucas numbers.
This paper follows the following steps :
We first give the generalities on binary linear recurrence, then we demonstrate an important inequality on Lucas numbers and finally determine and reduce a coarse bound by section $3$.
The section $4$ is devoted to the reduction of the obtained bound in section $3$ and discussion of possible different cases.
We know from   Bravo and Lucas \cite{3} that the only solutions of the Diophantine equation $F_n + F_m = 2^a$ in positive integers $n$, $m$ and $a$ with $n \geq m$ are given by
$$
2F_1=2, \quad 2F_2=2, \quad 2F_3=4, \quad 2F_6=16,$$ and $$F_2+F_1=2, \quad F_4+F_1=F_4+F_2=4, \quad F_5+F_4=8,\quad F_7+F_4=16.
$$
 and in \cite{4} that
all solutions of the Diophantine equation $L_n+L_m=2^{ a }$ in nonnegative integers $n \geq m$
and $a$, are 
$$
2L_0=4,\quad 2L_1=2,\quad 2L_3=8,\quad L_2+L_1=4, \quad L_4+L_1=8, \quad \text{and}\quad L_7+L_2=32.
$$

Here in this paper, we determine all the solutions of the following Diophantine equation:
\begin{equation}\label{eq:1}
L_n+L_m=3^a
\end{equation}
 in nonnegative integers $n\geq m$ and $a$.
 
  We are interested in finding all powers of three which are sums of two Lucas numbers, i.e., we study the exponential Diophantine equation $L_n + L_m = 3^{a}$ in nonnegative integers
 $n$, $ m,$ and~$a$. The proof of our main theorem uses lower bounds for linear forms in logarithms, properties of continued fractions, and a version of the Baker-Davenport reduction method in Diophantine approximation.

We  notice that  many authors have already tackled this type of problems.

\section{Preliminaries}

\subsection{Generalities}

\begin{definition}

Let $k\geq 1$. The sequence $\{H_n\}_{n\geq 0}\subseteq \mathbb{C}$ is called a recurrent linear sequence of order $k$ if the sequence satisfies
\begin{equation*}
H_{n+k}=a_1H_{n+k-1}+a_2H_{n+k-2}+ \cdots+a_kH_n
\end{equation*}
for all $n\geq 0$ with $a_1,\ldots,a_k\in \mathbb{C}$, fixed.
\end{definition}

We suppose that $a_k\neq 0$ (otherwise, the sequence $\{H_n\}_{n\geq 0}$ satisfies a recurrence of order less than $k$). If $a_1,\ldots ,a_k \in \mathbb{Z}$ and $H_0,\ldots,H_{k-1}\in \mathbb{Z}$, then we can easily prove by induction on $n$ that $H_n$ is an integer for all $n\geq 0$. The polynomial
\begin{equation*}
f(X)=X^k-a_1X^{k-1}-a_2X^{k-2}-\cdots -a_k \in \mathbb{C},
\end{equation*}
is called the characteristic polynomial of  $(H_n)_{n\geq 0}$. We suppose that
\begin{equation*}
f(X)=\prod_{i=1}^m(X-\alpha_i)^{\sigma_i},
\end{equation*}
where $\alpha_1$, \dots, $\alpha_m$ are distinct roots of $f(X)$ with respectively $\sigma_1$, \dots, $\sigma_m$ their multiplicities.

\begin{definition}

 We define the sequences  $(A_n)_{n\geq 0}$ and $(B_n)_{n\geq 0}$ for all positive integers  $\mathbb{N}$ by 
$$
\left\{
    \begin{array}{lllll}
        A_{n+2}& =& aA_{n+1}+A_n,&A_0=0,&A_1=1\\
        B_{n+2}& =& aB_{n+1}+B_n,&B_0=2,&B_1=a.\\
    \end{array}
\right.
$$

For $a=1$, $(A_n)_{n\geq 0}=(F_n)_{n\geq 0}$ and $(B_n)_{n\geq 0}=(L_n)_{n\geq 0}$ , which are  Fibonacci and Lucas sequences respectively, defined above.
\end{definition}

\begin{remark}

 If $k=2$, the sequence $(H_n)_{n\geq 0}$ is called a binary recurrent sequence. In this case, the characteristic polynomial is of the form 
\begin{equation*}
f(X)=X^2-a_1X-a_2=(X-\alpha_1)(X-\alpha_2).
\end{equation*}
\end{remark}

Suppose that $\alpha_1\neq \alpha_2$, then $H_n=c_1\alpha_1^n+c_2\alpha_2^n$
for all $n\geq 0$.
\begin{definition}

 The binary recurrent sequence  $\{H_n\}_{n\geq 0}$ is said to be non degenerated if $c_1c_2\alpha_1\alpha_2\neq 0$ and $\alpha_1/\alpha_2$ is not a root of unity.
 
\end{definition}

Binet's formula for the general term of Fibonacci and Lucas sequences is obtained using standard methods for solving recurrent sequences, which are given by :

\begin{equation*}
F_n=\dfrac{\alpha^n-\beta^n}{\alpha-\beta}\quad \text{and}\quad L_n=\alpha^n+\beta^n
\end{equation*}

where $\left(\alpha,\beta\right)=\left(\dfrac{1+\sqrt{5}}{2},\dfrac{1-\sqrt{5}}{2} \right)$ are the zeros of the characteristic polynomial $X^2-X-1$.

\begin{definition}

 For all algebraic numbers  $\gamma$, we define its measure by the following identity :
\begin{equation*}
{\rm M}(\gamma)=|a_d|\prod\limits_{i=1}^d \max \{1,|\gamma_{i}|\},
\end{equation*}
where $\gamma_{i}$ are the roots of $f(x)=a_d\prod\limits_{i=1}^d(x-\gamma_{i})$ is the minimal polynomial of  $\gamma$.
\end{definition} 
 
Let us define now another height, deduced from the last one, called the absolute logarithmic height. It is the most used one.

\begin{definition}( Absolute logarithmic height)
  
For a non-zero algebraic number of degree $d$ on $\mathbb{Q}$ where the minimal polynomial on  $\mathbb{Z}$ is $f(x)=a_d\prod\limits_{i=1}^d(x-\gamma_{i})$, we denote by
\begin{equation*}
h(\gamma)=\dfrac{1}{d}\left(\log|a_d|+\sum\limits_{i=1}^d \log\max\{1,|\gamma_i|\}\right) = \dfrac{1}{d} \log {\rm M}(\gamma).
\end{equation*}
the usual logarithmic absolute height of $\gamma$.
 \end{definition}

 The following properties of the logarithmic height, will also be used in the next section:
 \begin{itemize}
 \item[•] $h(\gamma \pm \eta)\leq h(\gamma)+h(\eta)+\log 2$.
 \item[•] $h(\gamma\eta^{\pm 1})\leq h(\gamma)+h(\eta)$.
 \item[•] $h(\gamma^s)=|s|h(\gamma)$.
 \end{itemize}

\subsection{Inequalities involving the Lucas numbers}

In this section, we state and prove important inequalities associated with the Lucas numbers that will be used in solving the equation (\ref{eq:1}).

\begin{proposition}
For $n\ge 2$, we have 
 \begin{equation}\label{eq:2}
 0.94 \, \alpha^n< (1-\alpha^{-6})\alpha^n \le L_n\le (1+\alpha^{-4})\alpha^n<1.15\, \alpha^n
 \end{equation}

\end{proposition}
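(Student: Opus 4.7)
The plan is to reduce everything to Binet's formula $L_n=\alpha^n+\beta^n$, where $\beta=-1/\alpha$, so that the inequality to be established is equivalent to the two-sided estimate
\[
-\alpha^{n-6}\le \beta^n\le \alpha^{n-4}\qquad (n\ge 2),
\]
together with the two purely numerical checks $1-\alpha^{-6}>0.94$ and $1+\alpha^{-4}<1.15$. The numerical checks are immediate from $\alpha=(1+\sqrt 5)/2\approx 1.618$: one gets $\alpha^{-4}\approx 0.1459$ and $\alpha^{-6}\approx 0.0557$.

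For the remaining inequality $-\alpha^{n-6}\le\beta^n\le\alpha^{n-4}$ I would split according to the parity of $n$, since this controls the sign of $\beta^n=(-1)^n\alpha^{-n}$. If $n$ is even, then $\beta^n=\alpha^{-n}\ge 0$, so the lower bound $\beta^n\ge -\alpha^{n-6}$ is trivial, and the upper bound $\alpha^{-n}\le\alpha^{n-4}$ reduces to the elementary exponent inequality $-n\le n-4$, i.e.\ $n\ge 2$. If $n$ is odd (so in particular $n\ge 3$ under the hypothesis $n\ge 2$), then $\beta^n=-\alpha^{-n}\le 0\le\alpha^{n-4}$, handling the upper bound; and the lower bound $-\alpha^{-n}\ge-\alpha^{n-6}$ becomes $\alpha^{-n}\le\alpha^{n-6}$, which holds precisely when $n\ge 3$. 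Thus both bounds are valid for every $n\ge 2$.

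Putting the pieces together, from $L_n=\alpha^n+\beta^n$ one obtains
\[
(1-\alpha^{-6})\alpha^n=\alpha^n-\alpha^{n-6}\le \alpha^n+\beta^n=L_n\le \alpha^n+\alpha^{n-4}=(1+\alpha^{-4})\alpha^n,
\]
and the outer inequalities $0.94\,\alpha^n<(1-\alpha^{-6})\alpha^n$ and $(1+\alpha^{-4})\alpha^n<1.15\,\alpha^n$ follow from the numerical estimates above.

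There is no real obstacle here; the only mildly delicate point is that the lower bound in the odd case needs $n\ge 3$, so one must remark that the hypothesis $n\ge 2$ combined with odd parity automatically gives $n\ge 3$. The case $n=2$ (even) is covered by the even analysis, and a direct numerical check ($L_2=3$ versus $(1-\alpha^{-6})\alpha^2\approx 2.47$ and $(1+\alpha^{-4})\alpha^2\approx 3.00$) confirms sharpness at the endpoint.
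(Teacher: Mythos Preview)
Your proof is correct and follows essentially the same approach as the paper: the paper's argument consists of the single sentence ``This follows directly from the formula $L_n=\alpha^n+(-1)^n\alpha^{-n}$,'' and your parity analysis simply unpacks that remark in full detail. The only thing you add beyond the paper is the explicit verification of the numerical outer bounds and the observation that equality holds in the upper inequality at $n=2$.
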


\begin{proof}
This follows directly from the formula 
$L_n=\alpha^n + (-1)^n \alpha^{-n}.$
\end{proof}

\subsection{Linear forms in logarithms and continued fractions}

In order to prove our main result, we have to use a Baker-type lower bound several times for a non-zero linear forms of logarithms in algebraic numbers. There are many of these methods in the literature like that of Baker and Wüstholz in \cite{1}. We recall the result of Bugeaud, Mignotte, and Siksek which is a modified version of the result of Matveev \cite{8}. With the notation of section $2$, Laurent, Mignotte, and Nesterenko \cite{7} proved the following theorem:

\begin{theorem}\label{theo2}

Let $\gamma_1$, $ \gamma_2$ be two non-zero algebraic numbers, and let $\log \gamma_1$ and $\log \gamma_2$ be any determination of their logarithms. Put $D=[\mathbb{Q}(\gamma_1,\gamma_2):\mathbb{Q}]/[\mathbb{R}(\gamma_1,\gamma_2):\mathbb{R}]$, and 

$$
\Gamma:=b_2\log \gamma_2-b_1\log \gamma_1,
$$
 where $b_1$ and $b_2$ are positive integers. Further, let $A_1, A_2$ be real numbers $>1$ such that 
 
$$
\log A_i\geq\max \left\lbrace h(\gamma_i),\dfrac{|\log \gamma_i|}{D},\dfrac{1}{D}\right\rbrace,\quad  (i=1,2).
$$

 Then, assuming that $\gamma_1$ and $\gamma_2$ are mutiplicatively independent, we have

$$
\log |\Gamma|> -30.9\cdot D^4\left(\max \left\lbrace\log b',\dfrac{21}{D},\dfrac{1}{2}\right\rbrace\right)^2\log A_1\cdot \log A_2,
$$
 where 
$$
b'=\dfrac{b_1}{D\log A_2}+\dfrac{b_2}{D\log A_1}.
$$

\end{theorem}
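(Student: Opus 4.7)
The theorem stated above is the Laurent--Mignotte--Nesterenko lower bound for a linear form in two logarithms, imported from \cite{7} as a black box; the authors do not reprove it, and a complete treatment would require a self-contained paper in transcendence theory. Nevertheless, the underlying strategy is the classical Gel'fond--Baker method, sharpened by Laurent's interpolation-determinant technique, and I would sketch the proof as follows. The plan is to argue by contradiction: assume $\log|\Gamma|$ is strictly smaller than the asserted bound, and derive an impossibility by exhibiting an algebraic number that is simultaneously very small by analytic means and bounded below arithmetically.

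For the construction, fix integer parameters $L,R,S,K$ (to be optimized at the end) and consider the interpolation determinant
\[
\Delta \;=\; \det\bigl(\gamma_1^{r_i k_j}\gamma_2^{s_i k_j}\bigr)_{1\le i,j\le N},
\]
where $N=LK$ and the pairs $(r_i,s_i)$ and the shifts $k_j$ range over carefully chosen finite sets of integers (a rectangle in $\mathbb{Z}^2$ and an arithmetic progression, respectively). The entries lie in $\mathbb{Q}(\gamma_1,\gamma_2)$, so $\Delta$ is an algebraic number of controlled degree. A Liouville-type inequality in terms of $h(\gamma_1)$, $h(\gamma_2)$, and $D$ then gives the alternative: either $\Delta=0$, or $|\Delta|$ admits a sharp explicit lower bound.

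On the analytic side, one views $\Delta$ as the value at $z=0$ of the determinant of a matrix whose entries are the entire functions $z\mapsto \gamma_1^{r_i z}\gamma_2^{s_i z}=\exp\bigl(z(r_i\log\gamma_1+s_i\log\gamma_2)\bigr)$ sampled at $z=k_j$. Applying Schwarz's lemma on a disk of radius growing with $\max_j|k_j|$, and feeding in the contradiction hypothesis $|\Gamma|=|b_2\log\gamma_2-b_1\log\gamma_1|$ very small, yields an upper estimate of the form $\log|\Delta|\le -c\,(\text{parameters})$ that, once the parameters are balanced against the Liouville bound, forces $\Delta=0$.

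Finally, a zero-estimate (essentially Laurent's rank lemma applied to the matrix $(\gamma_1^{r_i k_j}\gamma_2^{s_i k_j})$), combined with the hypothesis that $\gamma_1$ and $\gamma_2$ are multiplicatively independent, contradicts $\Delta=0$ and closes the argument. The principal obstacle, and the reason the result is cited rather than reproved, is numerical rather than conceptual: the explicit constants $30.9$ and $21$, as well as the exact definition of $b'$, arise from a delicate optimization of the four parameters $L,R,S,K$ against $D$, $\log A_1$, $\log A_2$, and $b_1,b_2$, performed in full in \cite{7}, and no shortcut through that optimization seems possible within a brief sketch.
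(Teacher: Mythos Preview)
Your assessment is correct: the paper does not prove this theorem at all but simply quotes it from Laurent, Mignotte, and Nesterenko \cite{7} as a tool to be applied later, so there is no ``paper's own proof'' to compare against. Your sketch of the interpolation-determinant method is a faithful outline of how the result is actually established in \cite{7}, and your remark that the explicit constants come from a parameter optimization that cannot be compressed is exactly why the authors cite the result rather than reprove it.
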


We shall also need the following theorem due to Matveev, Lemma due to Dujella and Peth\H o and Lemma due to Legendre \cite{8,6}.

\begin{theorem} (Matveev \cite{8})\label{theo3}

 Let $n\geq 1$ an integer. Let $\mathbb{L}$ a field of algebraic number of degree $D$. Let $\eta_1$, \dots, $\eta_l$ non-zero elements of \ $\mathbb{L}$ and let 
$b_1$, $b_2$, \dots, $b_l$ integers, 
$$
B:=\max\{|b_1|,...,|b_l|\},
$$ 

and
\begin{equation*}
\Lambda:=\eta_1^{b_1}\cdots\eta_l^{b_l}-1=\left(\prod\limits_{i=1}^l \eta_i^{b_i}\right)-1.
\end{equation*}
Let $A_1$, \dots, $A_l$ reals numbers such that 
\begin{equation*}
A_j\geq \max\{Dh(\eta_j),|\log (\eta_j)|,0.16\}, 1\leq j\leq l.
\end{equation*}
Assume that  $\Lambda\neq 0$, So we have
\begin{equation*}
\log|\Lambda|>-3\times 30^{l+4}\times (l+1)^{5.5}\times d^2 \times A_1...A_l(1+\log D)(1+\log nB)
\end{equation*}
Further, if\  $\mathbb{L}$ is real, then 
\begin{equation*}
\log|\Lambda|>-1.4\times 30^{l+3}\times (l)^{4.5}\times d^2 \times A_1...A_l(1+\log D)(1+\log B).
\end{equation*}

\end{theorem}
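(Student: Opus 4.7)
The statement is Matveev's theorem on lower bounds for linear forms in logarithms, a cornerstone of transcendence theory rather than a result that would be proved from scratch in a paper on Diophantine equations such as this one; in the excerpt it is quoted verbatim from \cite{8}. Still, one can outline at a high level the Baker-style strategy that underlies every bound of this shape, and that is what a proof plan would have to reproduce.

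The argument proceeds by contradiction: assume $|\Lambda|$ is much smaller than the claimed lower bound. One then constructs an auxiliary function of the form
\[
\Phi(z_1,\ldots,z_l) \;=\; \sum_{h_1,\ldots,h_l} p(h_1,\ldots,h_l)\, \eta_1^{h_1 z_1}\cdots \eta_l^{h_l z_l},
\]
with integer coefficients $p(\cdot)$ of controlled size, so that $\Phi$ vanishes to high order at many integer points of the form $(m,m,\ldots,m)$. The existence of such a $\Phi$ is guaranteed by Siegel's lemma, a quantitative pigeonhole statement about integer solutions of linear systems whose coefficients are not too large.

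The key step is the extrapolation. Using the assumption that $|\Lambda|$ is very small, together with the identity $\eta_1^{b_1}\cdots \eta_l^{b_l} = 1 + \Lambda$, one shows that $\Phi$ is small at further points of the lattice. Combining this with maximum-modulus estimates for $\Phi$, viewed as an entire function of exponential type, one upgrades smallness to outright vanishing at those new points, and iterates. After enough rounds the number of vanishing conditions exceeds the degrees of freedom available in the coefficients $p(\cdot)$, which forces a nontrivial algebraic relation among the $\eta_i$ and contradicts their multiplicative independence.

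The hard part is not the overall structure but the careful bookkeeping of every numerical constant through Siegel's lemma, the interpolation estimates, and the final elimination step. Matveev's specific contribution over the earlier bounds of Baker and Baker--W\"ustholz is precisely a more efficient accounting that removes an $n!$-type factor and yields the explicit constants $3\cdot 30^{l+4}(l+1)^{5.5}$ and $1.4\cdot 30^{l+3}\, l^{4.5}$ appearing in the statement. For a fully self-contained proof, I would refer the reader to \cite{8}, as the authors of the present paper do, since reproducing it would dwarf the rest of the article.
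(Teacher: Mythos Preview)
Your assessment is correct: the paper does not prove this statement at all. Theorem~\ref{theo3} is quoted from Matveev~\cite{8} as a black-box tool, exactly as you surmised, and the authors immediately move on to applying it. There is therefore nothing in the paper to compare your sketch against; your high-level outline of the Baker auxiliary-function method is a reasonable summary of what a genuine proof would involve, and your closing remark that one should simply cite~\cite{8} matches precisely what the paper does.
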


During our calculations, we get upper bounds on our variables which are too large, so we have to reduce them. To do this, we use some results from the theory of continued fractions. In particular, for a non-homogeneous linear form with two integer variables, we use a slight variation of a result due to Dujella and Peth\H o, (1998) which is in itself a generalization of the result of Baker and Davemport \cite{2}.

For a real number $X$, we write $\norm{X} :=\min\{\mid X-n\mid :n\in \mathbb{Z}\}$ for the distance of $X$ to the nearest integer.

\begin{lemma}\label{lem2}(Dujella and Peth\H o, \cite{6}) 

Let $M$ a positive integer, let $p/q$ the convergent of the continued fraction expansion of $\kappa$ such that $q>6M$ and let  $A$, $B$, $\mu$ real numbers such that  $A>0$ and $B>1$. Let $\varepsilon:=\norm{\mu q}-M\norm{\kappa q}$.\\ If $\varepsilon>0$ then  there is no solution of the inequality 
\begin{equation*}
0<m\kappa -n+\mu< AB^{-m}
\end{equation*}
 in integers $m$ and $n$ with 
$$
\dfrac{\log (Aq/\varepsilon)}{\log B}\leqslant m \leqslant M.
$$

\end{lemma}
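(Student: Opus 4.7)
The plan is to multiply the target inequality by $q$ and exploit two facts: first, that $q\kappa$ lies very close to the integer $p$ because $p/q$ is a convergent; and second, that $q\mu$ is, by hypothesis, far enough from every integer to make $\varepsilon>0$. Together these will pin the quantity $q(m\kappa-n+\mu)$ between incompatible bounds unless $m$ is small, yielding an upper bound on $m$ that matches the threshold $\log(Aq/\varepsilon)/\log B$ appearing in the statement.

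Concretely, I would start from a hypothetical solution $(m,n)$ of $0<m\kappa-n+\mu<AB^{-m}$ with $m\leq M$ and multiply through by $q>0$. Writing $mq\kappa=mp+m(q\kappa-p)$ rearranges the resulting expression as
\[
q(m\kappa-n+\mu) \;=\; (mp-nq) + m(q\kappa-p) + q\mu.
\]
Setting $Q:=mp-nq\in\mathbb{Z}$, this becomes $Q+q\mu = q(m\kappa-n+\mu)-m(q\kappa-p)$. The triangle inequality bounds the right-hand side in modulus by $qAB^{-m}+M\,|q\kappa-p|$. Since $p/q$ is a convergent, $|q\kappa-p|=\norm{q\kappa}$, so $|Q+q\mu|<qAB^{-m}+M\norm{q\kappa}$.

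The crucial step comes next: because $Q$ is an integer, $|Q+q\mu|\geq\norm{q\mu}$ by definition of the nearest-integer distance. Substituting and rearranging gives $\norm{q\mu}-M\norm{q\kappa}<qAB^{-m}$, i.e., $\varepsilon<qAB^{-m}$. Taking logarithms yields $m<\log(Aq/\varepsilon)/\log B$, contradicting the lower bound on $m$ in the statement. Hence no such pair $(m,n)$ can exist in the prescribed range.

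The main subtlety I expect is bookkeeping rather than conceptual: one must be careful that $p$ is truly the nearest integer to $q\kappa$, not merely close to it, so that the substitution $|q\kappa-p|=\norm{q\kappa}$ is legitimate. This is where the convergent property together with $q>6M$ quietly does its work, since then $M\norm{q\kappa}<M/q<\tfrac{1}{6}$, which keeps the perturbation term $m(q\kappa-p)$ from absorbing $\norm{q\mu}$ whenever $\varepsilon>0$. Beyond that, the entire argument is essentially one application of the triangle inequality followed by the integrality of $Q$.
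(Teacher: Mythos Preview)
The paper does not supply its own proof of this lemma; it is quoted verbatim as a result of Dujella and Peth\H{o} and used as a black box. Your argument is correct and is essentially the original proof: multiply by $q$, isolate the integer $Q=mp-nq$, and use $|Q+q\mu|\ge\norm{q\mu}$ together with $|q\kappa-p|=\norm{q\kappa}$ to obtain $\varepsilon<qAB^{-m}$.

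One small remark on exposition: the identity $|q\kappa-p|=\norm{q\kappa}$ does not rely on the hypothesis $q>6M$. It follows directly from the convergent property, since $|q\kappa-p|<1/q\le\tfrac12$ already forces $p$ to be the nearest integer to $q\kappa$. The condition $q>6M$ is a practical device ensuring that $M\norm{q\kappa}<\tfrac16$, which in applications typically makes $\varepsilon>0$; it plays no logical role in the chain of inequalities once $\varepsilon>0$ is assumed.
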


\begin{lemma} \label{lem1}(Legendre) 

Let $\tau$ real number such that $x$, $y$ are integers such that 
\begin{equation*}
\left| \tau - \dfrac{x}{y}\right|<\dfrac{1}{2y^{2}}.
\end{equation*}
  then $\dfrac{x}{y}=\dfrac{p_{k}}{q_{k}}$  is the convergence of $\tau$. 
  
\end{lemma}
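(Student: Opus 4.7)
The plan is to invoke the standard machinery of simple continued fractions. Without loss of generality I may assume $\gcd(x,y)=1$, since if the fraction is not already in lowest terms it cannot equal a convergent $p_k/q_k$ anyway, and its reduced form satisfies the same hypothesis. I would write $x/y=[a_0;a_1,\ldots,a_n]$ as a finite simple continued fraction, exploiting the fact that every rational admits exactly two such expansions (of lengths differing by one) so as to fix the parity of $n$ so that $(-1)^n$ has the same sign as $\tau-x/y$. Writing $p_k/q_k$ for the convergents of this expansion, I then have $p_n=x$, $q_n=y$, and the determinant identity $p_n q_{n-1}-p_{n-1}q_n=(-1)^{n-1}$.

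The next step is to introduce the auxiliary real number $\omega$ defined implicitly by
\[
\tau=\dfrac{p_n\omega+p_{n-1}}{q_n\omega+q_{n-1}}.
\]
A short manipulation using the determinant identity yields
\[
\tau-\dfrac{x}{y}=\dfrac{(-1)^n}{q_n(q_n\omega+q_{n-1})},
\]
and the parity choice on $n$ forces the denominator $q_n\omega+q_{n-1}$ to be positive. Substituting into the hypothesis $|\tau-x/y|<1/(2y^2)$ and using $y=q_n$ then gives $q_n\omega+q_{n-1}>2q_n$, hence $\omega>2-q_{n-1}/q_n\geq 1$, since $q_{n-1}\leq q_n$.

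Because $\omega>1$, it admits its own simple continued fraction expansion $\omega=[a_{n+1};a_{n+2},\ldots]$ with $a_{n+1}\geq 1$. Concatenating with $[a_0;a_1,\ldots,a_n]$ produces a simple continued fraction for $\tau$ whose $n$-th convergent is precisely $p_n/q_n=x/y$, which is exactly the conclusion sought.

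The hard part is really only the sign bookkeeping: arranging the parity of $n$ so that $(-1)^n/(q_n(q_n\omega+q_{n-1}))$ matches the sign of $\tau-x/y$ (ensuring $\omega>0$), and handling the borderline inequality $q_{n-1}\leq q_n$ at very small indices. Both issues are resolved by switching, when necessary, to the alternate expansion $[a_0;\ldots,a_n-1,1]$; everything else in the argument is mechanical manipulation of the convergent recurrences.
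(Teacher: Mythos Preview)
Your argument is the classical textbook proof of Legendre's criterion and is correct in all its essential points: the parity trick for the terminal partial quotient, the M\"obius-type representation $\tau=(p_n\omega+p_{n-1})/(q_n\omega+q_{n-1})$, the determinant identity, and the extraction of $\omega>1$ from the hypothesis are exactly the standard steps. The only cosmetic omissions are the degenerate case $\tau=x/y$ (where the claim is trivial) and a word on why the concatenated expansion is \emph{the} simple continued fraction of $\tau$ (uniqueness), but neither is a real gap.

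As for comparison with the paper: there is nothing to compare. The paper merely cites this lemma as a classical result of Legendre and offers no proof whatsoever, so your write-up strictly adds content rather than diverging from an existing argument.
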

Further, 
  \begin{equation*}
\left| \tau - \dfrac{x}{y}\right|>\dfrac{1}{(q_{k+1}+2)y^{2}}.
\end{equation*}

\section{Main result}
Our main result can be stated in the following theorem.

\begin{theorem}\label{theo}

	The only solutions $(n,m,a)$ of the exponential Diophantine equation 
	
	$L_n+L_m=3^a$ in nonnegative integers $n \geq m$ and $a$, are : $(1,0,1)\quad \text{and}\quad (4,0,2)$
 $$
 \text{i.e}\quad L_1+L_0=3, \quad \text{and}\quad L_4+L_0=9.
 $$ 
	
\end{theorem}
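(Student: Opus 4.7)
The plan is the now-standard Baker-Davenport strategy for exponential Diophantine equations in Lucas numbers. First, by direct enumeration for $n$ up to some modest bound (say $500$), one recovers only the two solutions listed in the theorem, so henceforth I may assume $n$ is large. Using Binet's formula, the equation becomes
\[
\alpha^n + \alpha^m + \beta^n + \beta^m = 3^a.
\]
The Proposition immediately gives $0.94\,\alpha^n < 3^a < 2.3\,\alpha^n$, so that $a\log 3$ and $n\log\alpha$ differ by a bounded constant; in particular $a < n$, which legitimises taking $n$ as the largest variable in what follows.

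Next I isolate the dominant term to produce a first linear form. Writing $\alpha^n - 3^a = -(L_m + \beta^n)$ and dividing by $\alpha^n$ gives
\[
|3^a\alpha^{-n} - 1| \le C_1\,\alpha^{m-n}.
\]
Applying Theorem~\ref{theo2} to $\Lambda_1 := a\log 3 - n\log\alpha$ with $\gamma_1 = 3$, $\gamma_2 = \alpha$ yields $\log|\Lambda_1| > -C_2\log n$, and comparison with the upper bound forces $n - m \ll \log n$.

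To sharpen this I group the two dominant Lucas contributions: from $\alpha^m(\alpha^{n-m}+1) - 3^a = -(\beta^n + \beta^m)$ one obtains
\[
|3^a\alpha^{-m}(\alpha^{n-m}+1)^{-1} - 1| \le C_3\,\alpha^{-n}.
\]
This is a three-term form with $\gamma_1 = 3$, $\gamma_2 = \alpha$, $\gamma_3 = \alpha^{n-m}+1$; the height inequalities in the preliminaries give $h(\gamma_3) \le \tfrac{n-m}{2}\log\alpha + \log 2$, which by the previous step is $O(\log n)$. Theorem~\ref{theo3} (Matveev) then gives $\log|\Lambda_2| > -C_4(\log n)^2$, and combining with the upper bound $\log|\Lambda_2| \le -n\log\alpha + O(1)$ yields an absolute though astronomical bound $n < N_0$.

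The hard part is shrinking $N_0$ to a size one can verify by hand. I invoke Lemma~\ref{lem2} with $\kappa = \log 3/\log\alpha$ and $B = \alpha$. For each integer value of $d := n - m$ in the small range surviving the first step, take $\mu_d = -\log(\alpha^d+1)/\log\alpha$ and feed $\Lambda_2$ into the Dujella-Peth\H o setup with $M = N_0$. Provided that a sufficiently high convergent of $\kappa$ gives $\varepsilon > 0$ (and if not, one passes to the next convergent; this is the practical obstacle), the lemma contracts the bound on $a$, hence on $n$, to the order of a few hundred, at which point a direct computation over the remaining range completes the proof.
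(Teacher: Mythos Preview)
Your outline is essentially the same two-step Baker--Davenport argument the paper carries out: a first two-logarithm form in $3$ and $\alpha$ to bound $n-m$, then a three-term Matveev form with $\gamma_3=1+\alpha^{-(n-m)}$ to bound $n$ absolutely, followed by a Dujella--Peth\H{o} reduction looping over the admissible values of $d=n-m$.

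Two small points of divergence worth noting. First, Theorem~\ref{theo2} (Laurent--Mignotte--Nesterenko) carries a factor $(\log b')^2$, so the bound it yields is $n-m\ll(\log n)^2$, not $n-m\ll\log n$; consequently Matveev gives $n\ll(\log n)^3$ rather than $(\log n)^2$. This is harmless for the structure but changes the explicit constants. Second, the paper inserts an intermediate step you omit: before running Dujella--Peth\H{o} on $\Lambda_2$, it applies Legendre's criterion (Lemma~\ref{lem1}) to the homogeneous form $\Lambda_1$ to cut the range of $n-m$ from order $10^6$ down to $111$, making the final loop short. Your version skips this and loops directly over the range coming from the first step; that still works computationally, but the ``small range'' you refer to is in fact rather large without the Legendre reduction.
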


\begin{proof}

 First, we study the case $n=m$, next we assume $n>m$ and study the case $n\leq 200$ with \textit{SageMath} in the range $0\leq m< n\leq 200$ and finally we study the case $n> 200$. Assume throughout that equation (\ref{eq:1}) holds. First of all, observe that if $n = m$, then the original equation (\ref{eq:1}) becomes $$L_n=\dfrac{3^a}{2}.$$ This equation has no solution because, $\forall n> 0, L_n\in \mathbb{Z}.$ So from now, we assume $n>m$.\\ If $n\leq 200$, the  search with \textit{SageMath} in the range $0\leq m<n\leq 200$ gives the solutions $(n,m,a)\in \left\lbrace (1,0,1), (4,0,2)\right\rbrace$. Now for the rest of the paper, we assume that $n> 200$ . Let first get a relation between $a$ and $n$ which is important for our purpose. 
Combining (\ref{eq:1}) and the right inequality of (\ref{eq:2}), we get:
$$3^a=L_n+L_m\leq 2\alpha^n+2\alpha^m< 2^{n+1}+2^{m+1}=2^{n+1}(1+2^{n-m})\leq 2^{n+1}(1+1/2)<2^{n+2}.$$ Taking $\log$  both sides, we obtain $$a\log 3\leq (n+2)\log 2\Longrightarrow a\leq(n+2)c_1\quad \text{where}\quad c_1=\frac{\log 2}{\log 3}.$$ Rewriting equation (\ref{eq:1}) as:
$$
L_n+L_m=\alpha^n+\beta^n+L_m=3^a\Longrightarrow \alpha^n-3^a=-\beta^n-L_m.
$$ 
Taking absolute value both sides, we get
$$
|\alpha^n-3^a|=|\beta^n+L_m|\leq |\beta|^n+L_m< \frac{1}{2}+2\alpha^m \quad\because |\beta|^n<\frac{1}{2},\quad\text{and}\quad L_m< 2\alpha^m.
$$
Dividing both sides by $\alpha^n$ and considering that $n>m$, we get:
$$
\left|1-\alpha^{-n}\cdot 3^a\right|<\dfrac{\alpha^{-n}}{2}+2\alpha^{m-n}<\dfrac{1}{\alpha^{n-m}}+\dfrac{2}{\alpha^{n-m}}\quad \because \frac{1}{2\alpha^n}< \frac{1}{\alpha^{n-m}};\quad n>m
$$
Hence 
\begin{equation}\label{eq:3}
\left|1-\alpha^{-n}\cdot 3^a\right|<\dfrac{3}{\alpha^{n-m}}
\end{equation} 

Let's take 
$$
\gamma_1:=\alpha,\quad \gamma_2:=3,\quad b_1:=n,\quad b_2:=a,\quad \Gamma:= a\log 3-n\log \alpha
$$ 
in order to apply Theorem \ref{theo2}. Therefore equation (\ref{eq:3}) can be rewritten as:
 \begin{equation}\label{eq:4}
\left|1-e^{\Gamma}\right|< \dfrac{3}{\alpha^{n-m}}\quad \text{where}\quad e^{\Gamma}=\alpha^{-n}3^a.
\end{equation}

Since $\mathbb{Q}(\sqrt{5})$ is the algebraic number field containing $\gamma_1,\gamma_2$; so we can take $D:=2$. Using equation (\ref{eq:1}) and Binet formula for Lucas sequence, we have : $$\alpha^n=L_n-\beta^n<L_n+1\leq L_n+L_m=3^a$$ which implies $1<3^a\alpha^{-n}$ and so $\Gamma> 0$. Combining this with (\ref{eq:4}), we get  

\begin{equation}\label{eq:5}
0<\Gamma<\dfrac{3}{\alpha^{n-m}}
\end{equation}
where we used the fact that $x\leq e^x-1,\quad \forall x\in \mathbb{R}.$ Applying $\log$ on right and left hand side of (\ref{eq:5}), we get 
\begin{equation}\label{eq:6}
\log \Gamma < \log 3-(n-m)\log \alpha.
\end{equation}
Logarithm height of $\gamma_1$ and $\gamma_2$ are:

$h(\gamma_1)=\dfrac{1}{2}\log \alpha= 0.2406\cdots,$ 
$h(\gamma_2)=\log 3= 1.09862\cdots,$ thus we can choose
$$
\log A_1:= 0.5 \quad \text{and}\quad \log A_2 := 1.1.  
$$

Finally, by recalling that $a\leq (n+2)c_1; \quad c_1=0.63093,$ we get :
$$
b':=\dfrac{b_1}{D\log A_2}+\dfrac{b_2}{D\log A_1}=\dfrac{n}{2.2}+a=0.45n+a< 0.45n+(n+2)c_1< 2n.
$$

It is easy to see that $\alpha$ and $3$ are multiplicatively independent. Then by Theorem \ref{theo2}, we have 
$$
\log \Gamma \geq -30.9 \cdot 2^4\left(\max \left\lbrace \log (2n),\dfrac{21}{2},\dfrac{1}{2}\right\rbrace\right)^2\cdot 0.5\cdot 1.1
$$
\begin{equation}\label{eq:7}
 \log \Gamma> -272\left(\max \left\lbrace \log (2n),\dfrac{21}{2},\dfrac{1}{2}\right\rbrace\right)^2. 
\end{equation}
Combining (\ref{eq:6}) and (\ref{eq:7}), we obtain the following important result 
\begin{equation}\label{eq:8}
(n-m)\log \alpha < 276 \left(\max \left\lbrace \log (2n),\dfrac{21}{2},\dfrac{1}{2}\right\rbrace\right)^2.
\end{equation}
Let us find a second linear form in logarithm. For this, we rewrite (\ref{eq:1}) as follows:

$$
\alpha^n(1+\alpha^{n-m})-3^a=-\beta^n-\beta^m.
$$
Taking absolute values in the above relation, we get $$|\alpha^n(1+\alpha^{m-n})-3^a|< 2, \quad \beta=(1-\sqrt{5})/2,\quad  |\beta|^n<1 \quad \text{and}\quad  |\beta|^m<1; \forall n> 200,\quad m\geq 0.$$ Dividing both sides of the above inequality by $\alpha^n(1+\alpha^{m-n})$, we obtain
 
\begin{equation}\label{eq:9}
\left|1-3^a\alpha^{-n}(1+\alpha^{m-n})^{-1}\right|< \dfrac{2}{\alpha^n} \quad \text{i.e}\quad |\Lambda|<\dfrac{2}{\alpha^n}.
\end{equation}

All the conditions are now met to apply a Matveev's theorem (Theorem \ref{theo3}).
\begin{itemize}
\item[•] \text{Data:}

$$
t:=3;\quad \gamma_1:=3; \quad \gamma_2:=\alpha; \quad \gamma_3:=1+\alpha^{m-n}
$$

$$
b_1:=a;\quad\quad  b_2:=-n,\quad b_3=-1.
$$
As before, $\mathbb{K}=\mathbb{Q}(\sqrt{5})$ contains $\gamma_1, \gamma_2,\gamma_3$ and has $D:=[\mathbb{K}:\mathbb{Q}]=2.$
Before continuing with the calculations, let's check whether $\Lambda\neq 0$.

$\Lambda\neq 0$ comes from the fact that if it was zero, we would have 

\begin{equation}\label{eq:10}
3^a=\alpha^n+\alpha^m
\end{equation}

Taking the conjugate of the above relation in $\mathbb{Q}(\sqrt{5})$, we get :

\begin{equation}\label{eq:11}
3^a=\beta^n+\beta^m.
\end{equation}

Combining (\ref{eq:10}) and (\ref{eq:11}), we get :

$$\alpha^n<\alpha^n+\alpha^m=|\beta^n+\beta^m|\leq |\beta|^n+|\beta|^m< 2.$$
Recall that $n>200$. This relation is impossible for $n>200$. Hence $\Lambda \neq 0.$
\item[•] \textbf{Calculation of $h(\gamma_3)$}

Let us now estimate $h(\gamma_3)$ where $\gamma_3= 1+\alpha^{m-n}$
$$
\gamma_3= 1+\alpha^{m-n}< 2\quad \text{and}\quad \gamma^{-1}=\dfrac{1}{1+\alpha^{m-n}}<1
$$
so $ |\log \gamma_3|<1.$ Notice that 
$$
h(\gamma_3)\leq|m-n|\left(\dfrac{\log \alpha}{2}\right)+\log 2=\log 2+(n-m)\left(\dfrac{\log \alpha}{2}\right).
$$
\item[•] \text{The calculation of $A_1$ and $A_2$ gives :}

$$
A_1:=2.2 
$$
and $$
A_2:=0.5 
$$
and we can take  
$$
A_3:=2+(n-m)\log \alpha \quad \text{since}\quad h(\gamma_3):= \log 2+(n-m)\left(\dfrac{\log \alpha}{2}\right)
$$
\item[•] \textbf{Calculation of $B$}

Since $a< (n+2)c_1$, it follows that, $B=\max \{1,n,a\}$. Thus we can take $B=n+1$. 
\end{itemize}
The Matveev's theorem gives the lower bound on the left hand side of (\ref{eq:9}) by replacing the data. We get :
$$
\exp \left(-C(1+\log(n+1))\cdot 2.2\cdot0.5\cdot(2+(n-m)\log \alpha)\right)
$$
 where 
$$
C:=1.4\cdot30^6\cdot 3^{4.5}\cdot 2^2(1+\log 2)< 9.7\times 10^{11}.
$$
Replacing in equation (\ref{eq:9}), we get:
$$
\exp \left(-C(1+\log(n+1))\cdot 2.2\cdot0.5\cdot(2+(n-m)\log \alpha)\right)<|\Lambda|<\dfrac{2}{\alpha^n}
$$ which leads to 

\begin{equation*}
n\log \alpha-\log 2<C((1+\log(n+1))\cdot 1.1\cdot(2+(n-m)\log \alpha)<2C\log n\cdot 1.1\cdot(2+(n-m)\log \alpha)
\end{equation*}
then \begin{equation}\label{eq:12}
n\log \alpha-\log 2< 1.26\times 10^{12}\log n\cdot(2+(n-m)\log \alpha)
\end{equation}

where we used inequality $1+\log(n+1)<2\log n$, which holds for $n>200$.

Now, using (\ref{eq:8}) in the right term of the above inequality (\ref{eq:12}) and doing the related calculations, we get 

\begin{equation}\label{eq:13}
n< 7.3\times 10^{14}\log n \left(\max\left\lbrace \log(2n),\dfrac{21}{2}\right\rbrace\right)^2.
\end{equation}

If $\max\{\log(2n),21/2\}=21/2,$ it follows from (\ref{eq:13}) that $n<8.04825 \cdot 10^{16}\log n \Longrightarrow n<3.5\cdot 10^{18}.$ On the other hand, if  $\max\{\log(2n),21/2\}=\log (2n)$, then from (\ref{eq:13}), we get $n< 7.3\cdot 10^{14}\log n\log^2(2n)$ and so $n<7.2 \cdot 10^{19}$. We can easily see that for the two possible values of $\max\{\log(2n),21/2\}$,  $n<7.2\cdot10^{19}$.

All the calculations done so far can be summarized in the following lemma.

\begin{lemma}\label{lem3} 

If $(n,m,a)$ is a solution in positive integers of (\ref{eq:1}) with conditions $n>m$ and $n>200$, then inequalities 
$$
a\leq n+2< 1.2\times 10^{20}	\quad \text{hold.}
$$ 
\end{lemma}

\section{Reducing of the bound on $n$}

Dividing across inequality  (\ref{eq:5}) : $0<a\log 3-n\log \alpha<\dfrac{3}{\alpha^{n-m}}$ by $\log \alpha$, we get 

\begin{equation}\label{eq:14}
0<a\gamma -n<\dfrac{7}{\alpha^{n-m}};\quad \text{where}\quad \gamma:=\dfrac{\log 3}{\log \alpha}.
\end{equation}

The continued fraction of the irrational number $\gamma$ is : $$[a_0,a_1,a_2,......]=[1,2,3,1,1,2,3,2,4,2,1,11,2,1,11,......]$$ and let denote $p_k/q_k$ its convergent. An inspection using \textit{SageMath} gives the following inequality
$$
4977896525362041575=q_{41}<1.2\times 10^{20}<q_{42}=805929983250536127817.
$$
Furthermore, $a_M:=\max \left\lbrace a_i| i=0,1,...,42\right\rbrace=161$
Now applying Lemma \ref{lem1} and properties of continued fractions, we obtain

\begin{equation}\label{eq:15}
|a\gamma-n|>\dfrac{1}{(a_M+2)a}.
\end{equation}

Combining equation (\ref{eq:14}) and (\ref{eq:15}), we get
 $$
 \dfrac{1}{(a_M+2)a}<|a\gamma-n|<\dfrac{7}{\alpha^{n-m}}\Longrightarrow \dfrac{1}{(a_M+2)a}<\dfrac{7}{\alpha^{n-m}}\Longrightarrow \alpha^{n-m}<7\cdot (161+2)a<1.3692 \cdot 10^{23}.
 $$
Applying $\log$ above and divide by $\log\alpha$, we get : 
$$
(n-m)\leq \dfrac{\log\left(7\cdot 163\cdot a\right)}{\log \alpha}< 111.
$$
To improve the upper bound on $n$, let consider 
\begin{equation}\label{eq:16}
z:=a\log 3-n\log \alpha-\log \rho (u)\quad \text{where}\quad \rho=1+\alpha^{-u}.
\end{equation}

From (\ref{eq:9}), we have 
\begin{equation}\label{eq:17}
|1-e^z|<\dfrac{2}{\alpha^n}.
\end{equation}
Since $\Lambda \neq 0$, then $z\neq 0$. Two cases arise : $z<0$ and $z> 0$. For each case, we will apply Lemma \ref{lem2}.

\begin{itemize}
\item[•] \textbf{Case 1 : $z> 0$}

 From (\ref{eq:17}), we obtain $0<z\leq e^z-1<\dfrac{2}{\alpha^n}.$
Replacing (\ref{eq:16}) in the above inequality, we get:
$$
0< a\log 3-n\log \alpha-\log \rho (n-m)\leq 3^{a}\alpha^{-n}\rho (n-m)^{-1}-1<2\alpha^{-n}$$ hence $$0< a\log 3-n\log \alpha-\log \rho (n-m)<2\alpha^{-n}
$$
and by diving above inequality by $\log \alpha$
\begin{equation}\label{eq:18}
 0< a\left(\dfrac{\log 3}{\log \alpha}\right)-n-\dfrac{\log \rho (n-m)}{\log \alpha}<5\cdot\alpha^{-n}.
\end{equation}
Taking, $\gamma:=\dfrac{\log 3}{\log \alpha},\quad \mu:=-\dfrac{\log \rho (n-m)}{\log \alpha},\quad A:=5,\quad B:=\alpha$, inequality (\ref{eq:18}) becomes
$$
0<a\gamma-n+\mu<AB^{-n}.
$$ 
Since $\gamma$ is irrational, we are now ready to apply lemma \ref{lem2} of Dujella and Pethö on (\ref{eq:18}) for $n-m \in \{1,...,111\}$.

Since $a\leq 1.2\times 10^{20}$ from lemma \ref{lem3}, we can take $M=1.2\times 10^{20},$ and we get
 $$
 n<\dfrac{\log(Aq/\varepsilon)}{\log B}\quad \text{where} \quad q> 6M
 $$ 
and $q$ is the denominator of the convergent of the irrational number $\gamma$ such that $\varepsilon:=||\mu q||-M||\gamma q||>0$. With the help of \textit{SageMath}, with conditions $z>0, \quad \text{and} \quad (n,m,a)$ a possible zero of (\ref{eq:1}), we get $n< 112$ which contradicts our assumption $n>200$. Then it is false.

\item[•]\textbf{Case 2 : $z< 0$}

Since $n>200$, then $\frac{2}{\alpha^n}<\frac{1}{2}$. Hence (\ref{eq:17}) implies that $|1-e^{|z|}|< 2$. Also, since $z<0$, we have $$0<|z|\leq e^{|z|}-1= e^{|z|}| e^{|z|}-1|<\dfrac{4}{\alpha^n}.$$ Replacing (\ref{eq:16}) in the above inequality and dividing by $\log 3$, we get:
\begin{equation}\label{eq:19}
0<n\left(\dfrac{\log \alpha}{\log 3}\right)-a+\dfrac{\rho(n-m)}{\log 3}< \dfrac{4}{\log 3}\cdot\alpha^{-n}< 4\cdot\alpha^{-n}
\end{equation}

\end{itemize}
In order to apply lemma \ref{lem3} on (\ref{eq:19}) for $n-m\in \{1,2,...,111\}$, let's take again $M=1.2\times 10^{20}$. With the help of \textit{SageMath}, with conditions $z<0, \quad \text{and} \quad (n,m,a)$ a possible zero of (\ref{eq:1}), we get $n< 111$ which contradicts our assumption $n>200$. Then it is false.

 This completes the proof of our main result (Theorem \ref{theo}).
\end{proof}

\section*{Acknowledgments}
The authors thank the professor Maurice Mignotte for his remarks and diponibility.

\newpage

\vspace{2cm}
\begin{multicols}{2}
\begin{flushleft}
\begin{center}
\textbf{Pagdame TIEBEKABE}\\
\small{
Université Cheikh Anta Diop (UCAD),}
\footnotesize{Laboratoire d'Algèbre, de Cryptologie,
de Géométrie Algébrique et Applications (LACGAA)}\\ 
\sc{Dakar, Sénégal}
\end{center} 
\end{flushleft}
\columnbreak
\begin{flushright}
\begin{center}
\textbf{Ismaïla DIOUF}\\
\small{
Université Cheikh Anta Diop (UCAD),}
\footnotesize{ Laboratoire d'Algèbre, de Cryptologie,
 de Géométrie Algébrique et Applications (LACGAA)}\\
\sc{Dakar, Sénégal}
\end{center}
\end{flushright}
\end{multicols}

\begin{thebibliography}{0}
\bibitem{1}
A. Baker and G. Wüstholz, Logarithmic and Diophantine Geometry, Vol 9 {\it New Mathematical Monographs (Cambridge University Press)}, 2007. 
 \bibitem{2}
  A. Baker and H. Davenport, The equations $3x^2 - 2 = y^2$ and $8x^2 -7 = z^2$, {\it Quart. J.
Math. Oxford Ser.} 20 (1969), 129-137.
\bibitem{3}
 J. J. Bravo and F. Lucas, Powers of Two as Sums of Two Lucas Numbers, {\it Journal of Integers sequences}, Vol.17(2014) Article 14.8.3
 \bibitem{4} 
 Bravo, J. J. and Luca, F. On the Diophantine Equation $F_n + F_m = 2^a$, {\it Quaestiones Mathematicae}, (2016) 39 (3), 391–400.
\bibitem{5}
 Y. Bugeaud, M. Mignotte, and S. Siksek, Classical and modular approaches to exponential Diophantine equations. I. Fibonacci and Lucas perfect powers, {\it Ann. of Math.}
163 (2006), 969-1018.

\bibitem{6}
 A. Dujella and A. Pethö, A generalization of a theorem of Baker and Davenport, {\it Quart. J. Math. Oxford Ser.} 49 (1998), no. 195, 291-306.
\bibitem{7}
 M. Laurent, M. Mignotte, and Y. Nesterenko, Formes linéaires en deux logarithmes et déterminants d’interpolation, (French) (Linear forms in two logarithms and interpolation determinants), {\it J. Number Theory} 55 (1995), 285-321.
\bibitem{8}
 E. M. Matveev, An explicit lower bound for a homogeneous rational linear form in
the logarithms of algebraic numbers, II, {\it Izv. Ross. Akad. Nauk Ser. Mat}. 64 (2000), 125-180. Translation in Izv. Math. 64 (2000), 1217-1269.
\end{thebibliography}
 \end{document}